\theoremstyle{plain}
\newtheorem{theorem}{Theorem}[section]
\newtheorem{theorem*}{Theorem}
\newtheorem{lemma}[theorem]{Lemma}
\newtheorem{corollary}[theorem]{Corollary}
\newtheorem{question}{Question}
\theoremstyle{definition}
\newcommand{\eps}{\ensuremath{\varepsilon}}
\newcommand{\Gnp}{\ensuremath{\mathcal G(n,p)}}
\newcommand{\calG}{\ensuremath{\mathcal G}}
\newcommand{\crchi}{\ensuremath{\chi_{\mathrm{cr}}(H)}}
\date{}
\title{\vspace{-0.7cm} On Koml\'os' tiling theorem in random graphs}
\author{
	Rajko Nenadov 
	\thanks{
		School of Mathematical Sciences, Monash University, Melbourne, Australia. Email: {\tt rajko.nenadov@monash.edu}.
	} 
	\and
	Nemanja \v Skori\'c
	\thanks{
		Institute of Theoretical Computer Science, ETH Zurich, 8092 Zurich, Switzerland. Email: {\tt nskoric@inf.ethz.ch}.
	}
}
\begin{document}
\maketitle

\begin{abstract}
Conlon, Gowers, Samotij, and Schacht showed that for a given graph $H$ and a constant $\gamma > 0$, there exists $C > 0$ such that if $p \ge Cn^{-1/m_2(H)}$ then asymptotically almost surely every spanning subgraph $G$ of the random graph $\Gnp$ with minimum
degree at least 
$$
	\delta(G) \ge \left(1 - \frac{1}{\crchi} + \gamma \right)np
$$
contains an $H$-packing that covers all but at most $\gamma n$ vertices. Here, $\crchi$ denotes the \emph{critical chromatic threshold}, a parameter introduced by Koml\'os. We show that this theorem can be bootstraped to obtain an $H$-packing covering all but at most $\gamma (C/p)^{m_2(H)}$ vertices, which is strictly smaller when $p > C n^{-1/m_2(H)}$. In the case where $H = K_3$ this answers the question of Balogh, Lee, and Samotij.
Furthermore, we give an upper bound on the size of an $H$-packing for certain ranges of $p$.
\end{abstract}

\section{Introduction}

Given graphs $G$ and $H$, a family of vertex-disjoint copies of $H$ in $G$ is called an \emph{$H$-packing}. This generalises the notion of matchings from edges ($H = K_2$) to arbitrary graphs. The study of sufficient degree conditions on $G$ which enforce the existence of a \emph{perfect} $H$-packing (an $H$-packing which covers all vertices of $G$), usually referred to as an \emph{$H$-factor}, dates back to the seminal work of Corr\'adi and Hajnal \cite{hajnal} and Hajnal and Szemer\'edi \cite{hajnal70sz}. In particular, they showed that every graph with $n = \ell k$ vertices and minimum degree at least $(\ell-1)n/\ell$ contains an $K_\ell$-factor. %This was later extended by Hajnal and Szemer\'edi \cite{hajnal70sz} who showed that $\delta(G) \ge (1 - 1/\ell)n$ suffices for the existence of a $K_\ell$-factor (assuming obvious divisibility conditions). 
Such bound on the minimum degree is easily seen to be the best possible. 

Progress towards generalising this result to an arbitrary graph $H$ was made in \cite{alon1992almosth,alon1996h,komlos2001proof}. The approximate result was obtained by Koml\'os \cite{komlos2000tiling}, where he determined the best possible bound on the minimum degree which enforces an $H$-factor covering all but at most $o(n)$ vertices. In particular, he showed that the main parameter which governs the existence of such a packing is the so-called \emph{chromatic threshold} $\crchi$, defined as
$$
	\frac{(\chi(H) - 1)v(H)}{v(H) - \sigma(H)},
$$
where $\sigma(H)$ denotes the minimum size of the smallest colour class in a colouring of $H$ with $\chi(H)$ colours.

\begin{theorem}[Tiling theorem \cite{komlos2000tiling}] \label{thm:tiling_komlos}
	For every graph $H$ and a constant $\gamma > 0$ there exists $n_0 \in \mathbb{N}$ such that if $G$ is a graph with $n \ge n_0$ vertices and 
	$$
		\delta(G) \ge \left(1 - \frac{1}{\crchi} \right) n,
	$$
	then $G$ contains an $H$-packing which covers all but at most $\gamma n$ vertices.
\end{theorem}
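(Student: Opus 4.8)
The plan is to prove Theorem~\ref{thm:tiling_komlos} by the \emph{regularity method}: reduce the tiling problem in $G$ to a purely combinatorial tiling problem in a bounded-size reduced graph, solve the latter by exploiting the extremal structure encoded in \crchi, and then lift the solution back to $G$ by a regularity-based embedding. First I would apply Szemer\'edi's Regularity Lemma to $G$ with parameters $\eps \ll d \ll \gamma$, obtaining clusters $V_0, V_1, \dots, V_k$ with $|V_0| \le \eps n$ and $|V_1| = \dots = |V_k| = m \approx n/k$, and let $R$ be the reduced graph on $[k]$ in which $ij$ is an edge exactly when $(V_i, V_j)$ is $\eps$-regular of density at least $d$. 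A standard averaging argument -- each vertex of $G$ sends only $O((\eps + d) n)$ edges into irregular or low-density clusters -- shows that $R$ inherits the degree bound
$$
	\delta(R) \ge \left(1 - \frac{1}{\crchi} - \beta\right)k,
$$
where $\beta \to 0$ as $\eps, d \to 0$. The goal then becomes: find in $R$ a weighted family of copies of $H$ whose blow-up covers all but a $\tfrac{\gamma}{2}$-fraction of $\bigcup_{i\ge 1} V_i$.

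The combinatorial heart is to produce a near-perfect \emph{fractional $H$-tiling} of the blow-up of $R$. Here a fractional copy is specified by a clique $Q \cong K_{\chi(H)}$ in $R$ together with an \emph{orientation} assigning the colour classes of a fixed optimal colouring of $H$ to the vertices of $Q$; it carries a nonnegative weight, and the load it places on a cluster $i \in Q$ is its weight times the size of the colour class assigned to $i$. Subject to each cluster having total load at most $m$, one maximises the covered weight. The crucial point -- and the reason \crchi rather than $\chi(H)$ is the correct threshold -- is that orientations may be chosen \emph{unbalanced}, routing the small colour class of size $\sigma(H)$ and the large colour classes into different clusters so as to absorb the imbalance already present in $R$. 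This is visible already for $H = P_3$, where $\crchi = 3/2$: the graph $R = K_{k/3,\,2k/3}$ has minimum degree only $k/3 < k/2 = (1-1/\chi(H))k$, yet its blow-up admits a \emph{perfect} $P_3$-tiling, since each copy places its size-$2$ class in the large part and its size-$1$ class in the small part. I would prove that $\delta(R) \ge (1 - 1/\crchi - \beta)k$ forces such a fractional tiling of coverage $1 - o_\beta(1)$ via an LP-duality / weighting argument: a deficient fractional tiling would, through the dual certificate, expose a set of clusters whose common neighbourhood is too small, contradicting the degree hypothesis once \crchi is substituted into the counting.

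Finally I would convert the fractional tiling into genuine vertex-disjoint copies of $H$ in $G$. Scaling each fractional weight by $m$ turns every oriented copy into an instruction to embed a prescribed number of disjoint copies of $H$ into the corresponding super-regular $\chi(H)$-partite system of clusters; the Key Lemma -- or, for a clean near-perfect embedding inside each system, the Blow-up Lemma -- guarantees these embeddings can be realised simultaneously, covering all but an $\eps$-fraction of each cluster used. Summing the losses, the number of uncovered vertices is at most $(\beta + \eps)n + |V_0| \le \gamma n$, as required.

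The hard part will be the combinatorial core of the second paragraph: establishing that $(1 - 1/\crchi)k$ is the \emph{sharp} minimum-degree threshold for near-perfect fractional $H$-tilings of $R$ for an \emph{arbitrary} $H$. This forces one to work with a general optimal colouring of $H$, with its possibly many distinct class sizes and the distinguished minimum $\sigma(H)$, rather than the clean clique case $H = K_r$, and to verify that the dual witness of any deficiency always violates the degree bound at exactly this threshold. A secondary but unavoidable nuisance is bookkeeping the three independent error budgets -- $\eps$ from regularity, $\beta$ from the degree transfer, and the fractional-tiling slack -- so that their sum stays below $\gamma$.
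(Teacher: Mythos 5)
First, a point of order: this paper does not prove Theorem~\ref{thm:tiling_komlos} at all --- it imports the statement verbatim from Koml\'os \cite{komlos2000tiling} and uses it only indirectly, as the ancestor of Theorem~\ref{thm:approximate_packing}. So the only meaningful comparison is with Koml\'os' original argument. Your outline does follow that strategy: regularity lemma, degree inheritance by the reduced graph $R$, a near-perfect fractional $H$-tiling of the blow-up of $R$ at threshold $1-1/\crchi$, and a lift via the Key/Blow-up Lemma. Your $P_3$ example also correctly isolates why unbalanced assignments of colour classes make \crchi, rather than $\chi(H)$, the relevant parameter.

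The gap is that the step you yourself flag as ``the hard part'' \emph{is} the theorem, and you do not prove it. The regularity, degree-transfer and embedding steps are routine and would yield the same conclusion with \crchi replaced by any parameter for which the fractional step holds; the sharp constant lives entirely in the claim that $\delta(R)\ge(1-1/\crchi-\beta)k$ forces a fractional tiling of coverage $1-o_\beta(1)$. Your proposed route --- ``a deficient fractional tiling would, through the dual certificate, expose a set of clusters whose common neighbourhood is too small'' --- is a hope, not an argument: the LP dual is a weighting $w\colon V(R)\to[0,1]$ of small total weight under which every admissible oriented copy is expensive, and extracting a minimum-degree violation from such a weighting for an \emph{arbitrary} optimal colouring of $H$, with several distinct class sizes of which only one equals $\sigma(H)$, is precisely the content of Koml\'os' proof. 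Koml\'os makes this tractable by first replacing $H$ with its ``bottle graph'' --- a complete $\chi(H)$-partite graph with one distinguished small part, chosen so that its critical chromatic number equals \crchi and so that tiling with it dominates tiling with $H$ --- thereby collapsing the many class sizes to two before the extremal analysis begins. Without either this reduction or a worked-out analysis of the dual certificate, your sketch assumes the theorem's essential content rather than proving it.
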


% Koml\'os conjectured that the same condition gives an $H$-packing which covers all but at most $C(H)$ vertices, for some constant $C$ depending only on $H$. This was confirmed by Shokoufandeh and Zhao \cite{zhao03koml}. Finally, K\"uhn and Osthus \cite{kuhn2009minimum}  showed that the minimum degree which guarantees a perfect $H$-packing is either determined by $\chi(H)$ or $\crchi$ and gave a complete characterisation for both cases. We refer the reader to \cite{kuhnosthus,kuhn2009minimum} for a detailed survey on the history of the problem and results not mentioned here.

Theorem \ref{thm:tiling_komlos} was further strengthened by Shokoufandeh and Zhao \cite{zhao03koml} and the problem was fully solved only recently by K\"uhn and Osthus \cite{kuhn2009minimum}. We refer the reader to \cite{kuhnosthus,kuhn2009minimum} for a detailed survey on the history of the problem and results not mentioned here.

\subsection{Packing theorems in random graphs}

% Even though the result of K\"uhn and Otshus almost completely settles the problem, it is  interesting to see what happens if we know some structural properties of $G$ rather than just its minimum degree. For example, the conjecture of Fischer \cite{fischer1999variants} states that if $G$ is a balanced $\ell$-partite graph (with $n$ vertices in each class) such that each vertex has at least $(1 - 1/\ell)n$ neighbours in each class other than its own, then $G$ contains a perfect $K_\ell$-packing. Note that it gives significantly weaker bound on the minimum degree compared to the Hajnal-Szemer\'edi theorem, subject to a certain structural property. After a series of papers, Keevash and Mycroft \cite{keevash2015multipartite} showed that the conjecture is true for all such graphs except the ones given by the construction of Catlin \cite{catlin}.

In this paper we are interested in up to which degree the stated theorems hold in random graphs. In particular, we consider the binomial random graph model $\Gnp$. The obvious question is for which $p$ does $\Gnp$ a.a.s \footnote{Asymptotically almost surely, i.e. with probability going to $1$ as $n \rightarrow \infty$.} contain a perfect $H$-packing. The case where $H = K_2$ was already proven by Erd\H{o}s and Renyi \cite{erdos60evo} and the general case was fully resolved by Johansson, Kahn, and Vu \cite{johansson2008factors}. Once this is settled, in the spirit of previously mentioned results it is natural to study whether subgraphs of random graphs with sufficiently large minimum degree contain a perfect (or almost-perfect) $H$-packing.

% It turns out that, once we have the right tools, the bound on the minimum degree analogue to the one in Koml\'os' tiling theorem (Theorem \ref{thm:tiling_komlos}) transfers to random graphs in a `straightforward' way. One of the most important tools in the extremal graph theory is \emph{Szemer\'edi's regularity lemma} \cite{szeRegularity}. A version of the regularity lemma suitable for sparse graphs, observed independently by Kohayakawa \cite{kohayakawa1997szemeredi} and R\"odl (unpublished), has proven to be a useful tool for tackling extremal problems in the random graph setting. However, the lack of an \emph{embedding lemma} which would cover the whole range of $p$ has limited its success until recently. Such a lemma suitable for application in the random graph setting was stated by Kohayakawa, \L uczak, and R\"odl \cite{kohayakawa1997onk} and it took almost 20 years until it was fully proven by Balogh, Morris, and Samotij \cite{balogh2015independent} and, independently, Saxton and Thomason \cite{saxton2015hypergraph}. Somewhat different version was obtained by Conlon, Gowers, Schacht, and Samotij \cite{conlon2014klr} and in the same paper the authors gave the following theorem as an application. They only stated it for $H = K_\ell$ and remarked that the same proof works for any $H$.
It turns out that, once we have the right tools, the bound on the minimum degree analogue to the one in Koml\'os' tiling theorem (Theorem \ref{thm:tiling_komlos}) transfers to random graphs in a `straightforward' way. 
The right tools turn out to be the sparse version of Szemer\'edi's regularity 
lemma observed by Kohayakava \cite{kohayakawa1997szemeredi} and  R\"odl 
(unpublished) together with the KLR Conjecture, first stated in 
\cite{kohayakawa1997onk} and  proven much later by by Balogh, Morris, and 
Samotij \cite{balogh2015independent} and, independently, Saxton and 
Thomason \cite{saxton2015hypergraph}. Somewhat different version was 
obtained by Conlon, Gowers, Schacht, and Samotij \cite{conlon2014klr} and 
in the same paper the authors gave the following theorem as an 
application. They only stated it for $H = K_\ell$ and remarked that the 
same proof works for any $H$.

\begin{theorem} \label{thm:approximate_packing}
	For any graph $H$ which contains a cycle and a constant $\gamma > 0$, there exist constants $b, C > 0$ such that if $p \ge Cn^{-1/m_2(H)}$, where
	$$
		m_2(H) = \max\left\{ \frac{e(H') - 1}{v(H') - 2} \colon H' \subseteq H \; \text{ and } \; v(H') \ge 3 \right\},
	$$
	then with probability at least $1 - e^{- b n^2 p}$ the random graph $\Gamma \sim \Gnp$ has the property that every spanning subgraph $G \subseteq \Gamma$ with minimum degree 
	$$
		\delta(G) \ge \left( 1 - \frac{1}{\crchi} + \gamma \right) np
	$$
	contains an $H$-packing which covers all but at most $\gamma n$ vertices.
\end{theorem}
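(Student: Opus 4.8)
The plan is to prove Theorem~\ref{thm:approximate_packing} by the standard transference scheme: reduce the tiling problem in the sparse random graph to a dense tiling problem in a bounded-size \emph{reduced graph}, to which Koml\'os' Theorem~\ref{thm:tiling_komlos} applies directly, and then lift the resulting tiling back to $G$ using a sparse counting lemma that holds throughout $\Gamma$ on a high-probability event. Concretely, I would first choose constants $d \ll \epsilon \ll \gamma$ together with the number of clusters $t$ supplied by the sparse regularity lemma. The \emph{good event} on $\Gamma$ is the one furnished by the KLR theorem: that for $p \ge Cn^{-1/m_2(H)}$ every family of pairwise disjoint vertex sets whose bipartite pairs (indexed by the edges of $H$) are $(\epsilon,p)$-regular with density at least $dp$ spans at least the expected number $\Omega\!\left(p^{e(H)}\prod_i |V_i|\right)$ of copies of $H$. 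Crucially this is a statement about \emph{all} sufficiently dense regular configurations inside $\Gamma$, so it is inherited by every subgraph $G \subseteq \Gamma$; the KLR theorem guarantees it with probability at least $1 - e^{-bn^2p}$, which is exactly the bound in the statement and the only place where randomness enters.

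Working on this event, fix any spanning $G \subseteq \Gamma$ with $\delta(G) \ge (1 - 1/\crchi + \gamma)np$. I would apply the degree form of the sparse regularity lemma to $G$ to obtain a partition $V_0, V_1, \dots, V_t$ with $|V_0| \le \epsilon n$ and almost all pairs $(\epsilon,p)$-regular, and define the reduced graph $R$ on $[t]$ by joining $i$ and $j$ whenever $(V_i,V_j)$ is $(\epsilon,p)$-regular of density at least $dp$. The key point here is the transference of the minimum-degree condition: since the edges hidden in $V_0$, in irregular pairs, and in low-density pairs account for only an $O(d+\epsilon)$-fraction of the $np$ neighbours of a typical vertex, one obtains $\delta(R) \ge (1 - 1/\crchi + \gamma/2)t$ after discarding a few exceptional clusters. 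It is essential to use the edge-based (rather than vertex-based) error control of the sparse regularity lemma, since in the regime $p = \Theta(n^{-1/m_2(H)})$ a naive bound of the form $|V_0|\cdot n$ would swamp the degree $np$. Applying Theorem~\ref{thm:tiling_komlos} to $R$ then yields an $H$-packing $\mathcal{M}$ in $R$ covering all but at most $(\gamma/3)t$ of the clusters.

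Each copy of $H$ in $\mathcal{M}$ occupies $v(H)$ clusters forming a copy of $H$ in $R$; the corresponding union of clusters is a sparse $(\epsilon,p)$-regular blow-up of $H$, inside which a copy of $H$ uses exactly one vertex from each cluster. I would tile each such blow-up to within a $\gamma/3$-fraction by greedily extracting copies of $H$ one at a time: the counting lemma guaranteed by the good event produces a copy as long as every cluster still retains at least a $\delta := \gamma/3$ fraction of its vertices, and restricting an $(\epsilon,p)$-regular pair to subsets of relative size at least $\delta$ leaves it $(\epsilon/\delta,p)$-regular of density at least $dp/2$. Taking $\epsilon$ small enough that this inherited regularity still suffices for the counting lemma, the greedy process covers all but a $\delta$-fraction of each blow-up. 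Summing the three sources of uncovered vertices — the exceptional set $V_0$ (of size at most $\epsilon n$), the clusters missed by $\mathcal{M}$ (at most $(\gamma/3)n$ vertices), and the $\delta$-fraction wasted inside each blow-up (at most $(\gamma/3)n$) — gives at most $\gamma n$ uncovered vertices, as required.

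The hypothesis that $H$ contains a cycle enters only to ensure $m_2(H) \ge 1$, so that $n^{-1/m_2(H)}$ is the correct threshold at which the KLR counting lemma becomes available. I expect the genuine obstacle to be the sparse almost-perfect tiling inside a single regular blow-up: unlike in the dense setting one cannot invoke the blow-up lemma, so the greedy argument must be calibrated so that $(\epsilon/\delta,p)$-regularity survives the removal of almost all vertices, which forces the hierarchy $d \ll \epsilon \ll \delta \le \gamma/3$. Equally delicate, and easy to get wrong, is that the good event must be a property of $\Gamma$ alone that transfers to \emph{every} subgraph $G$ simultaneously: a pair that is regular in $G$ need not be regular in $\Gamma$, so the KLR property has to be phrased as the absence of any sufficiently dense regular configuration containing too few copies of $H$, a formulation that is automatically inherited downward to all subgraphs and is what makes the single high-probability event uniform over the family of admissible $G$.
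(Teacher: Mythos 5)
The paper does not actually prove Theorem~\ref{thm:approximate_packing}: it is quoted from Conlon, Gowers, Samotij, and Schacht \cite{conlon2014klr}, who state it for $K_\ell$ and note the general case follows by the same argument. Your sketch correctly reconstructs that standard route --- sparse regularity lemma plus the counting form of the K\L R theorem as the single high-probability event inherited by all subgraphs, Theorem~\ref{thm:tiling_komlos} applied to the reduced graph, and greedy extraction of canonical copies inside regular blow-ups --- and it identifies the genuinely delicate points (downward inheritance of the good event, and edge-based control in the degree transference, which also needs the a.a.s.\ upper-uniformity of $\Gamma$), so it is essentially the same approach as the cited source.
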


% % It was remarked in \cite{balogh2012corradi} that Theorem \ref{thm:approximate_packing} can be obtained for $H = K_3$ using standard arguments of the regularity method (which is also the strategy used in \cite{conlon2014klr}), albeit with no proof. At the time, the aforementioned K\L R conjeture was verified only for a handful of cases, including $K_3$. 
% \NS{Ja bih rekao da je tvrdnja za trouglove prilicno ocigledna (kad jednom imas Steger, Gerke teoremu) i da nije vredno spomena ovde.}

It is known that for $p \ll n^{-1/m_2(H)}$ a.a.s there exists a spanning graph $G \subseteq \Gnp$ with $\delta(G) = (1 - o(1))np$ which does not contain a copy of $H$. Therefore, the bound on $p$ in Theorem \ref{thm:approximate_packing} is the best possible even if we only want to cover a linear fraction of all the vertices. Moreover, the constructions which show the optimality of $\delta(G)$ in Theorem \ref{thm:tiling_komlos} also show that by reducing the constant factor in the minimum degree to $1 - 1/\crchi - \varepsilon$ one cannot hope to cover more than $(1 - \varepsilon) n$ vertices and thus the leftover can not be an arbitrarily small linear fraction. %To summarise, there are two places where Theorem \ref{thm:approximate_packing} can be improved: the extra $\gamma np$ term in the bound on the minimum degree and the number of vertices which are not covered. 

Getting rid of the linear leftover in  Theorem \ref{thm:approximate_packing} seems to be a difficult task. Huang, Lee, and Sudakov \cite{huang2012bandwidth} showed that for constant $p$ and minimum degree at least $(1 - \chi(H) + \gamma) np$ one obtains a.a.s. a perfect $H$-packing if $H$ contains a vertex which does not belong to $K_3$, and otherwise there exists an $H$-packing covering all but at most $O(p^{-2})$ vertices. Moreover, they showed that the bound on the number of leftover vertices in the latter case is optimal up to the constant factor. Significantly improving the bound on $p$, Balogh, Lee, and Samotij \cite{balogh2012corradi} showed that for a constant $\gamma >0$ and $p \ge (C \log n / n)^{1/2}$ a.a.s every spanning subgraph $G \subseteq \Gnp$ with minimum degree $\delta(G) \ge (1/2 + \gamma)np$ contains a $K_3$-packing which covers all but at most $O(p^{-2})$ vertices. The authors further suggested that the $\sqrt{\log n}$ factor in the bound on $p$ is not needed, which we confirm in Theorem \ref{thm:main}. Recently, Allen, B{\"o}ttcher, Ehrenm{\"u}ller, and Taraz \cite{allen2015local}, relying on a sparse version of the \emph{blow-up lemma},  announced that the result of Huang et al. holds for $p \ge (C \log n / n)^{1/\Delta}$, where $\Delta$ is the maximum degree of a given graph $H$. 

% The proofs from \cite{allen2015local,balogh2012corradi,huang2012bandwidth} rely on a version of the \emph{blow-up lemma} and are inherently very technical. In fact, the work done in \cite{balogh2012corradi} could be considered as one of the first attempts to provide a blow-up lemma for sparse graphs. It is also worth noting \cite{allen2015local,huang2012bandwidth} provide much more general result of which the $H$-packing is a corollary. \NS{nisam siguran da li je ovaj pasus  potreban?}

\subsection{Our contribution}

We give a short proof of the theorem which replaces $\gamma n$ in Theorem \ref{thm:approximate_packing} by $\gamma (C/p)^{m_2(H)}$, which is clearly better for all $p > Cn^{-1/m_2(H)}$. 
% The proof is based on iterated application of Theorem \ref{thm:approximate_packing}, or rather on an easy corollary of it. 
% The main idea, which might be of independent interest, is to apply it on gradually decreasing subgraphs which pairwise intersect on a negligible fraction of vertices. Such property helps us to maintain the sufficient minimum degree in each of them and greedily choose a packing until we are left with $O(p^{-m_2(H)})$ vertices.
% The proof is based on iterated application of Theorem \ref{thm:approximate_packing}, which might be of independent interest.
% The main idea is that for each subgraph $G_1 \subseteq G$ on which  we apply Theorem \ref{thm:approximate_packing}, we have reserved a new random subgraph $G_2 \subseteq G$, vertex disjoint from $G_1$ and such that $|V(G_1)|$ is a small linear fraction of  $|V(G_2)|$. This enables us to take the uncovered vertices $U$ from $G_1$ and put them into $G_2$ without hurting the minimal degree of $G_2$ as  the size of $U$ is negligible in comparison to $|V(G_2)|$. By iterating this procedure the set of uncovered vertices get gradually smaller until  we are left with $O(p^{-m_2(H)})$ vertices.
The proof is based on simple bootstraping of Theorem \ref{thm:approximate_packing}, which might be of independent interest.

\begin{theorem} \label{thm:main}
	For any graph $H$ which contains a cycle and a constant $\gamma > 0$, there exists $C > 0$ such that if $Cn^{-1/m_2(H)} \le p \le (\log n)^{- 1 /(m_2(H) -1)}$ then $\Gamma \sim \Gnp$ a.a.s has the property that every spanning subgraph $G \subseteq \Gamma$ with 
	$$
		\delta(G) \ge \left(1 - \frac{1}{\crchi} + \gamma \right) np
	$$ 
	contains an $H$-packing which covers all but at most $\gamma (C/p)^{m_2(H)}$ vertices.
\end{theorem}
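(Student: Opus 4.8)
The plan is to use Theorem \ref{thm:approximate_packing} only as a black box, but to invoke it at the much smaller scale $N_0 := (C_0/p)^{m_2(H)}$, where $C_0 = C_0(\rho)$ is the constant that theorem supplies for a packing/slack parameter $\rho = \rho(\gamma)$ fixed below (morally $\rho \approx \gamma/4$). The first step is to upgrade Theorem \ref{thm:approximate_packing} from a statement about one graph into one holding \emph{simultaneously} for every vertex subset of size $\approx N_0$. Since the failure probability there is at most $e^{-b m^2 p}$ on $m$ vertices, and since for $m \ge N_0$ we have $m p \ge N_0 p = C_0^{m_2(H)} p^{-(m_2(H)-1)} \ge C_0^{m_2(H)} \log n$ — this is the \emph{one and only} place the upper bound $p \le (\log n)^{-1/(m_2(H)-1)}$ is used — a union bound over the at most $\binom{n}{m} \le e^{m \log n}$ choices of $W$ survives. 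Thus a.a.s.\ $\Gamma$ has the property that for every $W \subseteq V(\Gamma)$ with $N_0 \le |W| \le 2N_0$ and every $G_W \subseteq \Gamma[W]$ with $\delta(G_W) \ge (1 - 1/\crchi + \rho)|W| p$ there is an $H$-packing of $G_W$ covering all but $\rho|W|$ vertices.

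Next, given a spanning $G \subseteq \Gamma$ of the required minimum degree, I would fix a uniformly random equipartition $V = P_1 \cup \dots \cup P_k$ into $k = n/N_0$ blocks of size $N_0$. As the partition is independent of $G$, for each fixed $v$ the quantity $\deg_{G,P_j}(v)$ concentrates around its mean $\deg_G(v)/k \ge (1 - 1/\crchi + \gamma) N_0 p$, which is $\gg \log n$; a Chernoff bound and a union bound over all $v$ and $j$ then show that some partition satisfies $\deg_{G,P_j}(v) \ge (1 - 1/\crchi + \gamma/2) N_0 p$ for every $v \in V$ and every block. I fix such a partition and process the blocks one at a time, carrying forward an uncovered set $L_i$ under the invariant $|L_i| \le \gamma_0 N_0$, where $\gamma_0 = \Theta(\gamma)$ is chosen in the window $\tfrac{\rho}{1-\rho} \le \gamma_0 \le \tfrac{\gamma}{4(1-1/\crchi)}$ (nonempty once $\gamma \le 4/\crchi$, which we may assume since the statement only weakens for larger $\gamma$). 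At step $i+1$ I apply the uniform version from the first step to $W_i := L_i \cup P_{i+1}$, a set of size between $N_0$ and $(1+\gamma_0)N_0$.

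The degree check closes the loop: every vertex of $W_i$ sees at least $\deg_{G,P_{i+1}}(v) \ge (1-1/\crchi+\gamma/2)N_0 p \ge (1-1/\crchi+\rho)|W_i| p$ of its neighbours inside $W_i$ — here the upper constraint $\gamma_0 \le \frac{\gamma}{4(1-1/\crchi)}$ is exactly what converts the $N_0$ in the left-hand bound into $|W_i|$ on the right — so the packing exists and leaves at most $\rho|W_i| \le \rho(1+\gamma_0)N_0 \le \gamma_0 N_0$ vertices, which becomes $L_{i+1}$; the lower constraint $\gamma_0 \ge \frac{\rho}{1-\rho}$ preserves the invariant. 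Committing these packings for $i = 0, \dots, k-1$ covers all of $V$ except the final set $L_{k-1}$, of size at most $\gamma_0 N_0 \le \gamma (C/p)^{m_2(H)}$ once $C$ is taken slightly larger than $C_0$.

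The step I expect to be the crux is guaranteeing the minimum-degree hypothesis for the \emph{carried} leftover at each application. A vertex of $L_i$ has lost every neighbour packed in earlier blocks, so a priori its degree into the fresh block $P_{i+1}$ could be tiny; this is precisely what dooms the naive recursion ``pack $G$, then recurse on the uncovered set'', since the uncovered set of a sparse random graph need not induce any useful minimum degree (lower-bound discrepancy fails in $\Gnp$). The scheme survives because the global minimum-degree slack is never consumed: a single random partition spreads each vertex's \emph{global} neighbourhood evenly, so every vertex keeps a $\approx 1/k$ share of its degree in \emph{each} block, uniformly over all vertices and hence over the data-dependent $L_i$. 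The one genuine subtlety is the quantifier order — the uniform packing statement is an a.a.s.\ property of $\Gamma$ (so it may be fed the $G$- and partition-dependent sets $W_i$), whereas the degree-spreading is proved a.a.s.\ over the random partition for each fixed $G$, yielding a good partition for every admissible $G$ separately — together with checking that the feasibility window for $\gamma_0$ is nonempty, which I expect to be routine.
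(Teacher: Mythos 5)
Your proposal is correct and follows essentially the same route as the paper: a union bound over vertex subsets (using the upper bound on $p$ to beat $\binom{n}{m}$) upgrades Theorem \ref{thm:approximate_packing} to a uniform statement, a single random partition spreads every vertex's degree into every block, and the blocks are then packed sequentially with the leftover carried forward. The only difference is cosmetic — you use $n/N_0$ equal blocks of size $\Theta(p^{-m_2(H)})$ where the paper uses $O(\log n)$ geometrically shrinking ones — and your constants (e.g.\ the upper constraint on $\gamma_0$ should be $\gamma/(4(1-1/\crchi+\rho))$ rather than $\gamma/(4(1-1/\crchi))$) need only routine tightening.
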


Let us briefly compare our result to that of Allen et al. \cite{allen2015local}. On the one hand, for large values of $p$ our theorem gives weaker bound on the size of the largest $H$-packing whenever $m_2(H) > 2$ or $H$ contains a vertex which does not belong to $K_3$. This difference is the most drastic in the case where $H$ is a bipartite graph, in which case the result of Allen et al. implies the existence of a perfect $H$-packing. On the other hand, our theorem is stronger in the sense that it applies for the whole range of $p$ for which the problem is sensible. The result of Allen et al. requires $p \ge (\log n / n)^{1/\Delta}$ and it is easy to check that for all connected graphs $H$ which contain a cycle, other than $H = K_3$, we have $m_2(H) < \Delta$. In particular, this leaves a gap in the covered range of $p$ for all such graphs.

As a corollary we answer the question of Balogh et al. in the special case where $H = K_3$. As already mentioned, the following result is optimal with respect to all parameters, except for the technical upper bound on $p$.

\begin{corollary} \label{cor:main}
	Given a constant $\gamma > 0$, there exists $C > 0$ such that if $Cn^{-1/2} \le p \le (\log n)^{-1}$ then $\Gamma \sim \Gnp$ a.a.s has the property that every spanning subgraph $G \subseteq \Gamma$ with $\delta(G) \ge (2/3 + \gamma) np$ contains a $K_3$-packing which covers all but at most $\gamma (C/p)^{2}$ vertices.
\end{corollary}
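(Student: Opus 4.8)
The plan is to \emph{bootstrap} Theorem~\ref{thm:approximate_packing}: rather than applying it once, I apply it $\Theta(\log n)$ times, each time to a carefully chosen vertex subset, driving the uncovered set down geometrically until it has size $O\big((C/p)^{m_2(H)}\big)$. Write $m = m_2(H)$, $N = (C/p)^{m}$, and $\alpha = 1 - 1/\crchi$. The first ingredient is to \emph{localise} the packing theorem. Because its failure probability is exponentially small, I can union bound over subsets: applied to $\Gamma[U]\sim\mathcal G(u,p)$ with $u=|U|$ it fails with probability at most $e^{-bu^2p}$, and since $\binom{n}{u}\le e^{u\ln(en/u)}$, the bad event for \emph{some} $U$ with $|U|\ge N$ has probability at most $\sum_{u\ge N} e^{u\ln(en/u) - bu^2p}$. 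This is $o(1)$ exactly when $up\gtrsim\log n$ for all $u\ge N$, i.e.\ when $Np\gtrsim\log n$; and $Np = C^m p^{-(m-1)}\ge C^m\log n$ is guaranteed precisely by the hypothesis $p\le(\log n)^{-1/(m-1)}$. Thus a.a.s.\ the following holds simultaneously for \emph{every} $U$ with $|U|\ge N$: each $G'\subseteq\Gamma[U]$ with $\delta(G')\ge(\alpha+\eta)|U|p$ contains an $H$-packing covering all but $\eta|U|$ vertices, where $\eta>0$ is a small constant fixed below. I condition on this event; the rest is deterministic in $G$.

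\textbf{The bootstrap step.} Suppose I have a packing leaving an uncovered set $R$ with $N\le|R|\le\eta n$. The obstruction (see below) is that $G[R]$ carries no minimum-degree information. Instead I \emph{borrow} a set $X$ of about $c_0|R|$ already-covered vertices, taken to be a union of whole copies of the current packing (so deleting these copies uncovers exactly $X$), and re-pack $Y := R\cup X$. Choosing $X$ as a random union of covered copies, each included independently with probability $q=|X|/|V\setminus R|$, the expected value of $\deg_Y(v)$ for $v\in Y$ is
\[
	|N_G(v)\cap R| + q\,|N_G(v)\cap(V\setminus R)| \;=\; q\deg_G(v) + (1-q)|N_G(v)\cap R|\;\ge\; q(\alpha+\gamma)np,
\]
and the random term concentrates because $qnp\ge Np\ge C^m\log n$ and the copies have bounded size, so a valid $X$ exists. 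Since $qn\approx c_0|R|$ and $|Y| = (1+c_0)|R|$, taking $c_0$ just above $(\alpha+\eta)/(\gamma-\eta)$ yields $\delta(G[Y])\ge(\alpha+\eta)|Y|p$. Applying the localised theorem to $Y$ (legitimate as $|Y|\ge N$) and keeping the untouched copies produces a global packing whose leftover has size at most $\eta|Y| = \rho\,|R|$ with contraction factor $\rho := \eta\,\tfrac{\alpha+\gamma}{\gamma-\eta}$; choosing $\eta$ small relative to $\gamma$ makes $\rho<1$.

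\textbf{Iterating and finishing.} I first apply the localised theorem to $U=V$ (here $\delta(G)\ge(\alpha+\gamma)np\ge(\alpha+\eta)np$), giving leftover at most $\eta n$, and then iterate the bootstrap step, each time mapping $|R|\mapsto\le\rho|R|$, as long as $|Y|\ge N$, i.e.\ $|R|\ge N/(1+c_0)$. After $O(\log n)$ rounds the leftover falls below $N/(1+c_0)$, which is $\ll N$ since $c_0\approx\alpha/\gamma\gg1$; a single final application to a set $Y\supseteq R$ of size exactly $N$ (padding with covered vertices, whose random-like choice again restores the degree bound) leaves at most $\eta N\le\gamma N = \gamma(C/p)^m$ vertices uncovered, as required. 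Crucially, borrowed vertices are \emph{recycled} — they are re-covered by the new packing — so no vertex budget is ever depleted, which is why the iteration can be run to completion.

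\textbf{Where the difficulty lies.} The one place needing genuine care is that the set left uncovered by Theorem~\ref{thm:approximate_packing} inherits no minimum-degree guarantee and hence cannot be re-packed in isolation. The borrowing step resolves this: mixing $R$ with a controlled, random-like slice $X$ of the existing packing restores the degree condition on $Y=R\cup X$ while keeping $|Y|$ only a constant factor larger than $|R|$, so that the $\eta$-fraction leftover of $Y$ still contracts $|R|$. Forcing the contraction factor strictly below $1$ is what compels $\eta$ to be small compared with $\gamma$ and $1/\crchi$, and the degree estimate must exploit that a vertex's neighbours lying inside $R$ already belong to $Y$ (so they are counted in $\deg_Y$) — otherwise vertices with many neighbours in $R$ would spuriously appear to fail the degree bound. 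I expect verifying the concentration of $\deg_Y$ uniformly over all $v$ throughout the $O(\log n)$ rounds, down to $|Y|\approx N$, to be the main technical burden, but it is controlled by the single inequality $Np\ge C^m\log n$.
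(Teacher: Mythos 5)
Your argument is correct, and it reaches Corollary~\ref{cor:main} the same way the paper does, namely by proving the general statement of Theorem~\ref{thm:main} and specialising to $H=K_3$ (where $m_2(K_3)=2$ and $1-1/\chi_{\mathrm{cr}}(K_3)=2/3$). Your first ingredient is exactly the paper's Lemma~\ref{lemma:almost_spanning}: a union bound over all $U$ with $|U|\ge (C/p)^{m_2(H)}$, made affordable because the upper bound on $p$ forces $|U|p\gtrsim \log n$. Where you genuinely diverge is in how the minimum-degree condition is restored at each round. The paper fixes, \emph{before any packing exists}, a random partition $V_1\dot\cup\cdots\dot\cup V_q$ with $|V_i|\approx n/2^i$ and $|V_q|=\Theta(p^{-m_2(H)})$; a single Chernoff-plus-union-bound argument gives $\deg_G(v,V_i)\ge (1-1/\chi_{\mathrm{cr}}(H)+\gamma/2)|V_i|p$ for every $v$ and $i$, and the round-$i$ leftover $U_i$, of size at most $\gamma|V_i|/10\ll|V_{i+1}|$, is simply absorbed into the fresh set $V_{i+1}$, whose degree guarantee is only negligibly diluted. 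You instead recycle: you dissolve a random union $X$ of already-placed copies with $|X|\approx c_0|R|$ and re-pack $R\cup X$. Both schemes contract the leftover geometrically over $O(\log n)$ rounds and terminate at size $O((C/p)^{m_2(H)})$. The paper's pre-partition buys a cleaner structure — all randomness is spent once, so every subsequent round is purely deterministic — whereas your route needs a fresh probabilistic-existence argument for $X$ at every round, plus minor care because $X$ is a union of whole copies (so the inclusion of $v$ in $X$ is correlated with that of the other $v(H)-1$ vertices of its copy); these are routine Chernoff estimates driven by the same inequality $|R|p\gtrsim\log n$, and your observation that neighbours lying in $R$ count deterministically towards $\deg_Y(v)$ correctly disposes of vertices whose neighbourhood is concentrated in the leftover. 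In exchange, your approach never has to reserve fresh vertices, which is a mild structural advantage but changes nothing quantitatively here.
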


The paper is organised as follows. In the next section we give the proof of Theorem \ref{thm:main}. In Section \ref{sec:upper_bound_cover} we discuss bounds on the number of leftover vertices in different ranges of $p$. In particular, the obtained bounds suggest that Theorem \ref{thm:main} can be improved in many cases. Finally, some further research directions and open problems are discussed in Section \ref{sec:remarks}.

\medskip
\noindent
\textbf{Notation. } Given a graph $G = (V, E)$, we denote with $v(G)$ and $e(G)$ the size of its vertex and edge set, respectively. For a subset $S \subseteq V$ we use the standard notation $G[S]$ to denote the subgraph of $G$ induced by $S$, i.e. the graph with the vertex set $S$ consisting of the edges of $G$ with both endpoints in $S$. A partition of a set is a family of pairwise \emph{disjoint} subsets which cover the whole set. Whenever the use of floors and ceilings is not crucial it will be omitted.

\section{Proof of Theorem \ref{thm:main}}

The proof of Theorem \ref{thm:main} is based on iterated application of the following corollary of Theorem \ref{thm:approximate_packing}.

\begin{lemma} \label{lemma:almost_spanning}
	For any graph $H$ which contains a cycle and a constant $\gamma > 0$, there exists $C > 0$ such that if $Cn^{-1/m_2(H)} \le p \le (\log n)^{-1/(m_2(H) - 1)}$ then $\Gamma \sim \Gnp$ a.a.s has the property that every subgraph $G \subseteq \Gamma$ with $v(G) \ge (C/p)^{m_2(H)}$ and minimum degree $\delta(G) \ge (1 - \frac{1}{\crchi} + \gamma) v(G)p$ contains an $H$-packing which covers all but at most $\gamma v(G)$ vertices.
\end{lemma}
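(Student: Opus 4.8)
The plan is to derive Lemma \ref{lemma:almost_spanning} from Theorem \ref{thm:approximate_packing} by applying the latter \emph{inside} every sufficiently large vertex subset and controlling the error through a union bound. The key observation is that if $W \subseteq V(\Gamma)$ has size $m$, then the induced subgraph $\Gamma[W]$ is distributed exactly as $\calG(m,p)$, and the threshold hypothesis of Theorem \ref{thm:approximate_packing} for this smaller ground set, namely $p \ge C m^{-1/m_2(H)}$, is equivalent to $m \ge (C/p)^{m_2(H)}$. Thus the requirement $v(G) \ge (C/p)^{m_2(H)}$ appearing in the Lemma is precisely what is needed to run Theorem \ref{thm:approximate_packing} on $W = V(G)$, and the minimum degree condition $\delta(G) \ge (1 - 1/\crchi + \gamma)v(G)p$ matches its hypothesis once we regard $G$ as a spanning subgraph of $\Gamma[W]$.

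Concretely, I would first let $b$ and $C'$ be the constants supplied by Theorem \ref{thm:approximate_packing} for this $H$ and $\gamma$, and choose $C \ge C'$ large in terms of $b$ and $m_2(H)$. For each $m \ge m_0 := (C/p)^{m_2(H)}$ and each $W$ with $|W| = m$, Theorem \ref{thm:approximate_packing} applied to $\Gamma[W] \sim \calG(m,p)$ guarantees, with probability at least $1 - e^{-bm^2 p}$, that every spanning subgraph of $\Gamma[W]$ with minimum degree at least $(1 - 1/\crchi + \gamma)mp$ contains an $H$-packing missing at most $\gamma m$ vertices; write $\calB_W$ for the complementary bad event. A union bound over all admissible $W$ then yields
$$
	\Prob\Bigl[\, \bigcup_{|W| \ge m_0} \calB_W \,\Bigr] \le \sum_{m \ge m_0} \binom{n}{m} e^{-bm^2 p} \le \sum_{m \ge m_0} e^{m \ln n - bm^2 p}.
$$

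The heart of the argument is showing this sum is $o(1)$, and this is exactly where the upper bound $p \le (\log n)^{-1/(m_2(H)-1)}$ is used (note $m_2(H) > 1$ since $H$ contains a cycle, so the exponent makes sense). For $m \ge m_0$ one has $mp \ge m_0 p = C^{m_2(H)} p^{-(m_2(H)-1)}$, while the upper bound on $p$ rearranges to $p^{-(m_2(H)-1)} \ge \log n$; hence $bmp \ge b\,C^{m_2(H)} \log n$, which exceeds $2\ln n$ once $C$ is large enough. Each summand is then at most $e^{-m\ln n} = n^{-m}$, so the geometric tail is $O(1/n) = o(1)$, and a.a.s. none of the events $\calB_W$ occurs. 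On this good event, given any $G \subseteq \Gamma$ with $v(G) \ge m_0$ and the stated minimum degree, I set $W := V(G)$: then $G$ is a spanning subgraph of $\Gamma[W]$ meeting the hypothesis of Theorem \ref{thm:approximate_packing} for ground-set size $m = v(G)$, and since $\calB_W$ failed, $G$ admits an $H$-packing covering all but at most $\gamma\,v(G)$ vertices.

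The only genuine obstacle is the union-bound estimate above: one needs the exponentially small failure probability $e^{-bm^2 p}$ of Theorem \ref{thm:approximate_packing} to beat the $\binom{n}{m} \le e^{m\ln n}$ choices of $W$, and the margin is supplied entirely by the ceiling on $p$. Everything else is bookkeeping; in particular one should check that $m_0 \to \infty$ throughout the admissible range of $p$ (its minimum, attained at $p = (\log n)^{-1/(m_2(H)-1)}$, is of order $(\log n)^{m_2(H)/(m_2(H)-1)}$), so that the implicit lower bound on the ground-set size in Theorem \ref{thm:approximate_packing} is satisfied for all large $n$.
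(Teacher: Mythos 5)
Your proposal is correct and follows essentially the same route as the paper: apply Theorem \ref{thm:approximate_packing} to $\Gamma[W]\sim\calG(|W|,p)$ for every subset $W$ of size at least $(C/p)^{m_2(H)}$, observe that this size condition is exactly $p\ge C|W|^{-1/m_2(H)}$, use the ceiling on $p$ to get $|W|p\ge C^{m_2(H)}\log n$ so that the failure probability $e^{-b|W|^2p}$ beats the $\binom{n}{|W|}$ choices in a union bound, and then take $W=V(G)$. The only cosmetic difference is that the paper phrases the key estimate as $|W|\ge Cp^{-1}\log n$ and fixes $C>2/b$ up front rather than at the end.
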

\begin{proof}
	Let $C$ and $b$ be constants given by Theorem \ref{thm:approximate_packing} applied with $H$ and $\gamma$. We may assume that $C > 2/b$. We show that for every subset $S \subseteq V(\Gamma)$ of size $|S| = s \ge (C/p)^{m_2(H)}$, the induced subgraph $\Gamma[S]$ has the property that every spanning subgraph $G \subseteq \Gamma[S]$ with minimum degree $\delta(G) \ge (1 - 1/\crchi + \gamma)sp$ contains an $H$-packing which covers all but at most $\gamma s$ vertices. This clearly implies the lemma.

	From $s \ge (C/p)^{m_2(H)}$ we have $p \ge C s^{-1/m_2(H)}$, thus by Theorem \ref{thm:approximate_packing} the induced subgraph $\Gamma[S] \sim \calG(s, p)$ has the desired property with probability at least $1 - e^{-b s^2 p}$. %, for some constant $b$ depending only on $H$ and $\gamma$. 
	From the upper bound on $p$ we further get
	$$
		s \ge (C / p)^{m_2(H)} \ge C p^{-1} p^{-m_2(H) + 1} \ge Cp^{-1} \log n.
	$$
	Therefore, $\Gamma[S]$ has the described property with probability at least $1 - n^{- 2s}$, which is good enough to handle a union-bound over all possible sets $S$.
\end{proof}

% Note that the conclusion of Lemma \ref{lemma:almost_spanning} is the same as of Theorem \ref{thm:approximate_packing}, however it applies on all subgraphs with $\Omega(p^{-m_2(H)})$ vertices rather than only on the spanning subgraph. Not surprisingly, the proof is almost exactly the same as the proof of Theorem \ref{thm:approximate_packing} given in \cite{conlon2014klr} and relies on the sparse regularity lemma and recently proven K\L R conjecture. For the sake of completeness, we provide it in the appendix.

Having Lemma \ref{lemma:almost_spanning} at hand we describe our proof strategy. 
% Let $\Gamma \sim \Gnp$ and $G \subseteq \Gamma$ be a graph with the required minimum degree. Applying Lemma \ref{lemma:almost_spanning} on $G$ gives an $H$-packing which covers all but at most $\gamma n$ vertices, which we denote by $U$. If we could apply Lemma \ref{lemma:almost_spanning} again on $G[U]$ we would get an $H$-packing which covers all but at most $\gamma^2 n$ vertices. Applying the lemma further on the subgraph of $G$ induced by those vertices we cover all but $\gamma^3 n$ vertices, and so on until we get a subgraph on which we cannot longer apply Lemma \ref{lemma:almost_spanning}. By repeating this sufficiently many times we end up with a subgraph of size $O(p^{-m_2(H)})$, which proves the theorem. However, even in the second step we potentially get stuck as there is no control over the minimum degree of $G[U]$. For all we know the graph $G[U]$ could be empty.  \NS{Mislim da je sledeci paragraf  sasvim dobro opisuje strategiju.}
% We go around this issue using the following idea. 
First, we partition the vertex set of $G$ into  subsets $V_1 \dot\cup \ldots \dot\cup V_q$ of gradually decreasing size, with $V_q = \Theta(p^{-m_2(H)})$ being large enough to satisfy the requirement of Lemma \ref{lemma:almost_spanning}. By doing this at random we make sure that every vertex has `good' degree into every such subset. Now apply Lemma \ref{lemma:almost_spanning} on the largest subset $V_1$ to cover all but at most $\gamma |V_1|$ vertices, denoted by $U_1$. Even though the subgraph $G[U_1]$ might be empty, we know that every vertex in $U_1 \cup V_2$ has good degree into $V_2$. Crucially, if $U_1$ is much smaller than $V_2$, the second largest subset, then the number of neighbours of each $v \in U_1 \cup V_2$ relative to the size of $U_1 \cup V_2$ is negligibly smaller than relative to the size of $V_2$. Since the latter is sufficiently large, by carefully choosing the constants we obtain the required minimum degree of $G[U_1 \cup V_2]$ in order to apply Lemma \ref{lemma:almost_spanning}. This way we obtain an $H$-packing of $G[U \cup V_2]$ which covers all but at most $\gamma (|U_1| + |V_2|) \le 2 \gamma |V_2|$ vertices, denoted by $U_2$, and recall that all the vertices in $V_1 \setminus U$ are already covered. Now we repeat the same on the subgraph $G[U_2 \cup V_3]$ to obtain an $H$-packing of $G[V_1 \cup V_2 \cup V_3]$ which covers all but at most $2\gamma |V_3|$ vertices, and so on until we cover all but at most $2\gamma |V_q|$ vertices in $G$. We now make this precise.

\begin{proof}[Proof of Theorem \ref{thm:main}]
	Let $C$ be a constant given by Lemma \ref{lemma:almost_spanning} applied with $H$ and $\gamma/20$ (as $\gamma$). We show that if $\Gamma \sim \Gnp$ satisfies the property of Lemma \ref{lemma:almost_spanning} with these parameters then every spanning subgraph $G \subseteq \Gamma$ with the required minimum degree contains an $H$-packing which covers all but at most $\gamma (C/p)^{m_2(H)}$ vertices. Since the previous happens a.a.s for $p$ as stated, this proves the theorem. For the rest of the proof we let $G \subseteq \Gamma$ be an arbitrary spanning subgraph with $\delta(G) \ge (1 - 1/\crchi + \gamma)np$.

	Let $q \in \mathbb{N}$ be the largest integer such that $n/2^{q-1} > \lceil (C/p)^{m_2(H)} \rceil$ and consider a random partition of $V(G)$ into subsets $V_1, \ldots, V_q$ with $|V_i| = \lfloor n/2^i \rfloor$ for $i \in \{1, \ldots, q - 1\}$. Observe that 
	$$
		|V_q| = n - \sum_{i < q} \lfloor n / 2^i \rfloor \ge n - n \sum_{i < q}2^{-i} = n / 2^{q-1},
	$$
	and similarly $|V_q| \le n / 2^{q-1} + q$. Therefore, from $p \le (\log n)^{-1/(m_2(H) - 1)}$ we obtain
	\begin{equation} \label{eq:Vq_size}
		|V_i| \ge n / 2^{q-1} - 1\ge (C/p)^{m_2(H)} \ge C p^{-1} \log n
	\end{equation}
	for every $i \in [q]$. The expected number of neighbours of each vertex $v \in V(G)$ in $V_i$ is at least $(1 - 1/\crchi + \gamma)|V_i|p$, thus it follows from the Chernoff's inequality for hypergeometric distributions that
	$$
		\Pr\left[ \deg_{G}(v, V_i) \le \left(1 - \frac{1}{\crchi} + \gamma/2\right)|V_i|p \right] = e^{-\Omega(|V_i|p)} \stackrel{\eqref{eq:Vq_size}}{<} 1/n^2.
	$$
	(In the last inequality we assumed $C$ is sufficiently large.) A simple application of a union-bound shows that there exists a partition $V(G) = V_1 \cup \ldots \cup V_q$ with sizes as stated above such that for each $v \in V(G)$ and each $V_i$ we have
	\begin{equation} \label{eq:degree}
		\deg_{G}(v, V_i) \ge \left( 1 - \frac{1}{\crchi} + \gamma/2 \right) |V_i|p.
	\end{equation}

	Our plan is to inductively find an $H$-packing of $G[V_1 \cup \ldots \cup V_i]$ for $1 \le i \le q$ which covers all but at most $\gamma |V_i| / 10$ vertices. Calculation similar  to the one in \eqref{eq:Vq_size} shows that
	$$
		|V_q| \le n/2^{q-1} + q \le 2(C/p)^{m_2(H)} + \log n \le 3(C/p)^{m_2(H)},
	$$
	where in the second inequality we used the maximality of $q$ and an implicit assumption that $n$ is sufficiently large. Therefore, such an $H$-packing for $i = q$ covers all but at most $\gamma |V_q| / 10 \le \gamma(C/p)^{m_2(H)}$ vertices of $G$ which proves the theorem.

	For $i = 1$ we get the desired packing by simply applying Lemma \ref{lemma:almost_spanning} on $G[V_1]$. This is indeed possible since $|V_1| \ge (C/p)^{m_2(H)}$ (see \eqref{eq:Vq_size}) and the minimum degree holds by \eqref{eq:degree}. Note that we obtain slightly larger packing than needed (i.e. we cover all but at most $\gamma |V_1| / 20$ vertices).

	Next, let us suppose that there exists such an $H$-packing for some $i < q$ and let $U \subseteq V_1 \cup \ldots \cup V_i$ denote the subset of vertices which are not covered. Then $|U| \le \gamma |V_i| / 10 \le \gamma |V_{i+1}| / 4$, and for every vertex $v \in V(G)$ we have
	\begin{multline*}
		\deg_{G}(v, U \cup V_{i+1}) \ge \deg_{G}(v, V_{i+1}) \stackrel{\eqref{eq:degree}}{\ge} \left( 1 - \frac{1}{\crchi} + \gamma/2 \right) |V_{i+1}|p \\
		\ge \left( 1 - \frac{1}{\crchi} + \gamma/2 \right) \frac{|U| + |V_{i+1}| }{1 + \gamma/4}p \ge \left( 1 - \frac{1}{\crchi} + \gamma/4 \right) |U \cup V_{i+1}|p.
	\end{multline*}
	In particular, this implies 
	$$
		\delta(G[U \cup V_{i+1}]) \ge \left(1 - \frac{1}{\crchi} + \gamma/4\right)|U \cup V_{i+1}|p,
	$$
	and, as $|V_{i+1}| \ge (C/p)^{m_2(H)}$ (see \eqref{eq:Vq_size}), we can apply Lemma \ref{lemma:almost_spanning} to obtain an $H$-packing of $G[U \cup V_{i+1}]$ which covers all but at most $\gamma |U \cup V_{i+1}| / 20 \le \gamma |V_{i+1}| / 10$ vertices. Since all vertices in $\bigcup_{j \le i}V_j \setminus U$ are already covered by the packing obtained for $i$, this gives the desired $H$-packing of $G[V_1 \cup \ldots \cup V_{i+1}]$.
\end{proof}

\section{A lower bound on the number of leftover vertices} \label{sec:upper_bound_cover}

% The bound on the number of the leftover vertices in Theorem \ref{thm:main} is rather natural. It roughly corresponds to the size of a smallest induced subgraph $G' \subseteq V(\Gamma)$ on which we can hope to apply Theorem \ref{thm:approximate_packing} in order to get a large $H$-packing. This allows us to find the desired $H$-packing using a `local' (or greedy) approach without caring how the remaining vertices will be handled. However, for all smaller subgraphs the packing fails for the density reasons -- by removing a small fraction of edges one can destroy all the copies of $H$. Intuitively, this implies that in order to beat the bound obtained in Theorem \ref{thm:main} one needs a `global' strategy.  \NS{ne razumem bas ovaj parafraf}

% That being said, there is no reason to believe that our bound is tight in the regime of $p$ not covered by the result from \cite{allen2015local}. In this section we give a general upper bound on the number of leftover vertices for an arbitrary graph $H$ and derive a more precise bound in the case of complete graphs. \NS{ne bih rekao da je preciznije za klike. Mozda je i jedan i druga tvrdnja najbolja moguca ili jednako losa.}

The bound on the number of  leftover vertices in Theorem \ref{thm:main} asymptotically matches the lower bound obtained by Balogh et al. \cite{huang2012bandwidth} in the case of triangles. As we will see shortly, the situation is quite different for arbitrary graphs. In this section we obtain a general lower bound on the number of leftover vertices for an arbitrary graph $H$ and compare it with the result from Theorem \ref{thm:main}.

% Balogh et al. \cite{balogh2012corradi} showed that if $Cn^{-1/2} \le p \ll 1$ then $\Gnp$ contains a spanning subgraph with large minimum degree such that $\Omega(p^{-2})$ of its vertices are not contained in a triangle. Under slightly stronger upper bound on $p$, the following lemma provides a similar statement for an arbitrary graph $H$.

We make use of a concentration inequality by Kim and Vu \cite{kimvu00}, slightly rephrased for our particular application.
\begin{theorem}[Kim-Vu Polynomial Concentration]
\label{thm:kimvu}
Let $\mathcal H = (V(\mathcal H), E(\mathcal H))$ be a $k$-uniform hypergraph on $n$ vertices and let $\{t_i \colon i \in V(\mathcal H)\}$ be a set of mutually independent Bernoulli random variables with $E[t_i] = p$. For a subset of vertices $S \subseteq V(\mathcal H)$ let us denote 
$$
Y_S = \sum_{\substack{e \in E(\mathcal H)\\ S \subseteq e}} \prod_{i \in e} t_i 
$$
and for $i \in \{0,1 \ldots, k\}$ we set 
$E_i = \max_{S \subseteq V(\mathcal H), |S| = i} E[Y_S].
$
Furthermore, let $E' = \max_{i \ge 1} E_i$ and $E_{\mathcal H} = \max_{i \ge 0} E_i$. Then for any $\lambda > 1$ it holds
$$
\Pr[|Y_{\emptyset} - E_0| > a_k \sqrt{E' E_{\mathcal H}} \lambda^k] < d_k e^{-\lambda} n^{k-1},
$$
where $a_k = 8^k (k!)^{1/2}$ and $d_k = 2e^2$.
\end{theorem}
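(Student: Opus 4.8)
The plan is to prove the bound by induction on the degree $k$ of the polynomial $Y_\emptyset = \sum_{e \in E(\mathcal H)} \prod_{i \in e} t_i$, the point being that exposing a single variable reduces matters to polynomials of degree $k-1$ of exactly the same shape. Writing $Y_\emptyset = t_v D_v + R_v$, where neither $D_v$ nor $R_v$ involves $t_v$, the factor $D_v = \sum_{e \ni v} \prod_{i \in e \setminus \{v\}} t_i$ is again a polynomial of the form covered by the theorem but of degree $k-1$. The structural fact that drives the recursion is that the codegrees controlling the parameters of $D_v$ are codegrees of sets enlarged by the element $v$ in the original hypergraph; hence the $E_i$ of $D_v$ are, up to the change in uniformity, dominated by the $E_{i+1}$ of $Y_\emptyset$, and in particular both $E'(D_v)$ and $E_{\mathcal H}(D_v)$ remain within the budget set by $E'$. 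This is exactly why the scale $\sqrt{E' E_{\mathcal H}}$ is what persists under one round of exposure.

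For the base case $k = 1$ the polynomial $Y_\emptyset = \sum_{\{v\} \in E(\mathcal H)} t_v$ is a sum of independent Bernoulli variables with mean $E_0$ and $E_1 \le 1$, so a direct Chernoff--Hoeffding estimate gives exponential concentration of the advertised shape and it only remains to fix the constants $a_1, d_1$. For the inductive step I would expose the variables one at a time and form the Doob martingale $Z_j = E\!\left[ Y_\emptyset \mid t_1, \ldots, t_j \right]$, so that $Y_\emptyset - E_0 = \sum_j (Z_j - Z_{j-1})$; since $Y_\emptyset$ is multilinear, the increment generated by revealing $t_j$ is, up to conditioning, controlled by the value of the derivative $D_j$, a degree-$(k-1)$ polynomial to which the induction hypothesis applies.

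The crux of the matter---and the reason the statement is more than a routine Azuma application---is that $Y_\emptyset$ is \emph{not} a Lipschitz function of the $t_i$ in any deterministic sense: the martingale increments are random, their magnitudes being dictated by the fluctuating derivatives $D_j$ rather than by a fixed constant. The way around this is to invoke the induction hypothesis simultaneously for all $n$ derivatives $D_j$, so that outside an event of probability at most $d_{k-1} e^{-\lambda} n^{k-1}$ every one of them stays within $O\!\left(\sqrt{E' E_{\mathcal H}}\,\lambda^{k-1}\right)$ of its mean, the latter being at most $E'$. On this good event the quadratic variation of the martingale is under control, and a Freedman-type inequality then yields the desired deviation of order $\sqrt{E' E_{\mathcal H}}\,\lambda^{k}$ for $Y_\emptyset$. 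The genuinely delicate part is the bookkeeping: each level of the recursion contributes a bounded multiplicative factor (accumulating to $a_k = 8^k (k!)^{1/2}$) and a union bound over the $O(n)$ variables, and one must check that these combine into precisely the error term $d_k e^{-\lambda} n^{k-1}$. This constant- and exponent-chasing, rather than any conceptual leap, is where the work lies; the resulting argument is exactly that of Kim and Vu \cite{kimvu00}, whose statement we have merely adapted to the present notation.
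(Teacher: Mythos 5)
The paper offers no proof of this statement: it is imported (``slightly rephrased'') from Kim and Vu, so there is no internal argument to compare yours against. Your outline --- induction on the degree $k$, exposing variables one at a time, controlling the random martingale increments by the derivative polynomials $D_v$, which are degree-$(k-1)$ polynomials of the same type --- is indeed the skeleton of Kim and Vu's proof. But as written it is a plan rather than a proof: the two places where all the work sits (a martingale inequality that tolerates random, non-deterministic Lipschitz coefficients, and the verification that the failure probabilities and constants telescope to $d_k e^{-\lambda} n^{k-1}$ with $d_k$ \emph{independent} of $k$) are named and deferred, not carried out.

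More importantly, the one structural claim you do make --- that the parameters $E_i(D_v)$ of a derivative are dominated by the $E_{i+1}$ of $Y_\emptyset$ --- fails for the statement \emph{as rephrased here}, and in fact the rephrased statement is false as written. The paper sets $E[Y_S]=\sum_{e\supseteq S}p^{|e|}$, whereas Kim--Vu's parameters are the expected partial derivatives $E[\partial_S Y]=\sum_{e\supseteq S}p^{|e|-|S|}=E[Y_S]/p^{|S|}$. With the paper's normalization one gets $E_i(D_v)=E_{i+1}(Y_\emptyset)/p$, so your induction does not close, and the deviation scale $\sqrt{E'E_{\mathcal H}}$ is too small by at least a factor of order $\sqrt{p}$. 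Concretely, take $k=2$, $\mathcal H=K_n$ and $p=n^{-0.9}$: then $Y_\emptyset=\binom{T}{2}$ with $T\sim\mathrm{Bin}(n,p)$, whose fluctuations are of order $(np)^{3/2}=n^{0.15}$, while $a_2\sqrt{E'E_{\mathcal H}}\,\lambda^2\approx n^{3/2}p^2\lambda^2=n^{-0.3}\lambda^2=o(1)$ for $\lambda=4\log n$; since an integer-valued variable with this spread lands in a window of width $o(1)$ with probability $o(1)$, the left-hand side is $1-o(1)$ while the right-hand side is $O(n^{-3})$. Your base case $k=1$ fails for the same reason: a $\mathrm{Bin}(n,p)$ with bounded mean is not concentrated in a window of width $p\sqrt{n}\,\lambda=o(1)$, so Chernoff cannot deliver the advertised shape there. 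The statement you should be proving --- and the one actually needed in Lemma \ref{lemma:deleting} --- is the standard Kim--Vu inequality with $E_i=\max_{|S|=i}E[\partial_S Y]$; under that normalization your domination $E_i(D_v)\le E_{i+1}(Y_\emptyset)$ holds and the induction has a chance of closing.
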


The following lemma states the main result of the section. The lemma is formulated for a general graph $H$ which contains a cycle and afterwards we give two corollaries for the case of cliques and cycles.
 \begin{lemma} \label{lemma:deleting}
For any graph $H$ which contains a cycle and a constant $\eps > 0$, there exists $c > 0$ such that if
$$
	n^{-1/m_2(H)} \le  p \le  c^2( \log n)^{-\frac{2e(H)}{e(H)-2}} n^{-\frac{v(H) - 3}{e(H) -2}}
$$
then $\Gamma \sim \Gnp$ a.a.s. contains a spanning subgraph $G \subseteq \Gamma$ with $\delta(G) \ge (1 - \eps) np$ such that at least $c / (n^{v(H) -3 } p^{e(H) -1})$ vertices are not contained in a copy of $H$ in $G$.
\end{lemma}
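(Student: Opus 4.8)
The plan is to build $G$ by an edge-deletion (alteration) argument: fix a set of $N$ vertices in advance, surgically delete one edge from every copy of $H$ through them so as to make all of them free of copies of $H$, and then use Theorem \ref{thm:kimvu} to certify that these deletions are spread out enough to preserve the minimum degree. Concretely, set $N = \lfloor c/(n^{v(H)-3}p^{e(H)-1})\rfloor$ and fix an arbitrary $W \subseteq V(\Gamma)$ with $|W| = N$. Since $H$ contains a cycle it is not a star, so every vertex $r$ of $H$ is avoided by at least one edge $f_r \in E(H)$; fix one such $f_r$ for each $r$. For every copy $K$ of $H$ in $\Gamma$ meeting $W$, pick some $v \in W \cap V(K)$, let $r$ be its role in $K$, and place into a set $D$ the edge of $K$ playing the role of $f_r$; then set $G := \Gamma - D$. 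Because $f_r$ is not incident to $r$, no edge of $D$ is incident to the vertex of $W$ responsible for it, and because $G \subseteq \Gamma$ while every copy of $H$ through a vertex of $W$ has lost an edge, all $N$ vertices of $W$ are free of copies of $H$ in $G$.

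It then remains only to control the minimum degree. Since $np \to \infty$ for $p$ in the stated range, a.a.s.\ $\delta(\Gamma) \ge (1-\eps/2)np$, so it suffices to show that a.a.s.\ $\deg_D(u) \le (\eps/2)np$ for every vertex $u$. For a fixed $u$ I would bound $\deg_D(u)$ from above by the quantity $Y_u$ counting the copies $K$ of $H$ through some $v \in W$ whose designated edge is incident to $u$. This $Y_u$ is a polynomial of degree $e(H)$ in the edge indicators of $\Gamma$, and a direct first-moment computation gives $\Exp[Y_u] = \Theta(N\, n^{v(H)-2}p^{e(H)}) = \Theta(cnp)$: fixing the two roles lying on the designated edge leaves $v(H)-2$ free vertices and $e(H)$ required edges, and summing over the $O(1)$ possible roles of $v$ and over $v \in W$ gives this order, independently of which $f_r$ are chosen. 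Taking $c$ small then makes $\Exp[Y_u] \le (\eps/4)np$.

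The crux will be to upgrade this expectation into a bound holding \emph{uniformly} over all $n$ choices of $u$. I would apply Theorem \ref{thm:kimvu} to each $Y_u$ with $k = e(H)$, the variables $t_i$ being the edge indicators of $\Gamma$ and the hyperedges being the edge-sets of the counted copies. The main term is $E_0 = \Exp[Y_u] = \Theta(cnp)$, while for $i \ge 1$ the quantity $E_i$ measures how many counted copies can be forced through a fixed set of $i$ edges, a reflection of the local sparsity of $\Gamma$. Choosing $\lambda = \Theta(\log n)$ so that the failure probability $d_k e^{-\lambda}(\binom{n}{2})^{k-1}$ survives a union bound over $u$, the inequality yields $Y_u = (1+o(1))\Exp[Y_u] \le (\eps/2)np$ for all $u$ a.a.s.\ exactly when $E_0$ dominates $a_k\sqrt{E' E_{\calH}}\,(\log n)^{e(H)}$. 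Converting this domination into a condition on $p$ should reproduce precisely the stated range: the polynomial factor $n^{-(v(H)-3)/(e(H)-2)}$ is the sparsity threshold below which no single edge can lie on too many designated copies, and the power $(\log n)^{-2e(H)/(e(H)-2)}$ is what is needed to absorb the $\lambda^{e(H)}$ term.

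Thus the main obstacle is exactly the moment computation feeding Kim--Vu: one must identify the $i$-subset of edges maximizing $E_i$ and verify that $\sqrt{E'E_{\calH}}$ drops below $E_0$ in, and only in, the claimed range of $p$. Equivalently, one must rule out that some atypical vertex $u$ is incident to far more than $\Theta(cnp)$ designated edges, and it is here that both the upper bound on $p$ and the precise exponents are forced. By contrast, the existence of the edges $f_r$, the fact that deleting $D$ makes every vertex of $W$ free of copies of $H$, and the passage from $\deg_D(u) \le Y_u$ to the minimum-degree guarantee are all routine.
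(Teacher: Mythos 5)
Your proposal is essentially the paper's own argument: fix a target set of size $\Theta\bigl(c/(n^{v(H)-3}p^{e(H)-1})\bigr)$, delete from each $H$-copy meeting it an edge avoiding the responsible vertex (possible since a graph with a cycle is not a star), and control the per-vertex deletion count via Kim--Vu plus a union bound over vertices; the paper's only cosmetic difference is that it chooses the deleted edges greedily, requiring the copy to lie in $X^v\cap X^w$ for the deleted edge $\{v,w\}$, rather than pre-designating them by roles. The one substantive piece you defer --- the moment computation --- is where both bounds on $p$ enter, and it goes through exactly as you predict: the definition of $m_2(H)$ gives $E_i \le n^{v(H)-(i-1)/m_2(H)-2}p^{e(H)-i}$, which is non-increasing in $i$ precisely because $p \ge n^{-1/m_2(H)}$, so $E' \le E_1 \le n^{v(H)-2}p^{e(H)-1}$ and $E_0/E' \gtrsim Np$, which the upper bound on $p$ makes at least $c^{-3}(\log n)^{2e(H)}$, enough to absorb $\lambda^{e(H)}$ with $\lambda = \Theta(\log n)$. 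One small correction: this yields $Y_u \le C\,\Exp[Y_u]$ for an absolute constant $C$ rather than $(1+o(1))\Exp[Y_u]$, so $c$ must be taken small relative to $\eps/C$ after $C$ is fixed --- which is harmless, as the paper does exactly this.
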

\begin{proof}
Let $c$ be a sufficiently small constant and in particular such that $c \le (2e(H) v(H))^{-2 e(H)}$.
Consider a subset $X \subseteq V(\Gamma)$ of size $c  / (n^{v(H) -3 } p^{e(H) -1})$ and let $w \in V(\Gamma)$ be an arbitrary vertex from $\Gamma$. For a subset of edges $S \subseteq E(\Gamma)$ let $X^w_S$ denote the family of copies of $H$ in $\Gamma$ which contain vertex $w$, intersect the set $X \setminus w$ on at least one vertex and and contain all edges from $S$.  In particular, we denote $X^w := X^w_{S}$, when $S$ is an empty set.

We define an $e(H)$-uniform hypergraph $\mathcal H$ on  the vertex set $E(K_n)$, where a set of vertices in form a hyperedge if the corresponding edges from $\Gamma$ induce a copy from $X^{w}$. Let $Y_S, E_i, E'$ and $E_{\mathcal H}$ be as in Theorem \ref{thm:kimvu} applied to $\mathcal H$ with $e(H)$ (as $k$) and $\binom{n}{2}$ (as $n$). Note that $Y_S$ is nothing more than the number of copies of $H$ in $X^w$ which contain a fixed subset of edges $S \subseteq E(\Gamma)$. In particular, $Y_{\emptyset} = |X^w|$ and  $E_0 = E[|X^w|]$. Simple calculation shows
$$
E_0 = (1 + o(1)) |X^w| \binom{n}{v(H) - 2} p^{e(H)} \ge |X^w|n^{v(H)-2} p^{e(H)} (v(H))^{-v(H)} .
$$
Let us now estimate $E_i$ for $i \ge 1$. Since any subgraph of $H$ with $i$ edges contains at least 
$
\frac{i- 1}{m_2(H)} + 2
$
vertices (follows from the definition of $m_2(H)$) we have 
$
E_i \le n^{v(H) - \frac{i- 1}{m_2(H)} - 2} p^{e(H) - i}$. 
As $p \ge n^{-1/m_2(H)}$ we have that the right hand side of the previous inequality is a non-increasing function in $i$ and thus
$$
E' :=\max_{i \in \{1, \ldots , e(H)\}} E_i \le 
n^{v(H) -  2} p^{e(H) - 1}.
$$
Note that from the upper bound on $p$ we have
$$
|X^w| p  = \frac{c}{n^{v(H)-3} p^{e(H)-2}} \ge \frac{ \log^{2e(H)}n}{c^{2e(H) -3}} \ge c^{-3} \log^{2e(H)}n,
$$
where we used the fact that $H$ contains a cycle and therefore has at least three edges.
From the upper bound on $c$ we further get
\begin{equation}
\label{eq:kimvu}
\frac{E_0}{\sqrt{E_0 E'}} \ge \sqrt{ |X^w| p (v(H))^{-v(H)}} \ge  \sqrt{c^{-3} (\log n)^{2 e(H)}  (v(H))^{- v(H)}}
\ge  (2 e(H) \log n )^{e(H)}.
\end{equation}
From Kim-Vu Polynomial Concentration Theorem, together with the fact $E_0 \ge E_{\mathcal H} \ge E'$ and \eqref{eq:kimvu}, we obtain a constant $C > 0$ such that 
$$
\Pr[  |X^w| \ge C E[|X^w|] \le e^{- 2 \log n}.
$$
Taking a union-bound over all vertices, this implies that a.a.s for every vertex $w \in V(\Gamma)$ the number of copies of $H$ which contain $w$ and intersect $X^w \setminus w$ is at most 
$C |X^w| n^{v(H) - 2}p^{e(H)}$.
Recall that $|X^w| = c/ (n^{v(H) -3}p^{e(H) -1})$, thus by setting $c := \min\{ \eps / C, (2e(H) v(H))^{-2e(H)}\}$ we have that $|X^w|$ is at most $\eps np$ for each vertex $w \in V(\Gamma)$.

Finally, we obtain the graph $G \subseteq \Gamma$ (initially set $G := \Gamma$) by iterating the following procedure until there remains no copy of $H$ in $G$ which intersects $X$: 
Let $H'$ be a copy of $H$ which intersects $X$ and let $\{v,w\} \in E(H')$ be an edge such that 
$H' \in X^v \cap X^w$. Delete edge $\{v,w\}$ from $G$. Such edge must exist for the following reason. If $H'$ intersects $X$ on more than one vertex then it is easy to see that any edge would do. In the case when $H'$ intersects $X$ on exactly one vertex, then since $H$ contains a cycle there must exist an edge which is disjoint from $X$ and this edge satisfies the desired property.
% Let $H'$ be a copy of $H$ which intersects $X$ at a vertex denoted with $w$ and let $v \in V(H') \setminus w$ be an arbitrary vertex from $H'$ different from $w$. 
% If $H'$ intersects $X$ in more than one vertex then delete an arbitrary edge of $H'$ incident to $v$. Otherwise, we delete an edge of $H'$ incident to $v$ which is disjoint from $X$. Such edge must exist as minimal degree of $H'$ is at least two.

The procedure clearly stops and we just need to show that the remaining graph has the desired minimal degree. Let $v$ be a vertex and $e$ an arbitrary deleted edge incident to $v$. Since $e$ was originally contained in an $H$-copy from $X^v$, from  the fact that $|X^v| \le \eps n p$ we conclude that at most $\eps n p$ such edges incident to $v$ are deleted.
This finishes the proof.
\end{proof}

We state some corollaries of Lemma \ref{lemma:deleting}. In the following corollary, which follows directly from Lemma \ref{lemma:deleting}, we use $C_t$ to denote a cycle on $t$ vertices.

\begin{corollary}[Cycles] \label{cor:cycle}
Given an integer $t \ge 3$ and a constant $\eps > 0$, there exists a positive constant $c > 0$
such that if 
$$
	n^{-1/m_2(C_t)} \le p \le c^2 \log^{-2t/(t-2)} n^{-\frac{t - 3}{t-2}}
$$ 
then $\Gnp$ a.a.s. contains a spanning subgraph of minimum degree at least $(1 - \eps)np$ such that at least
$$
	c / (n^{t-3}p^{t-1})
$$
vertices are not contained in $C_t$.
\end{corollary}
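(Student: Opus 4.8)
The plan is to instantiate Lemma~\ref{lemma:deleting} with $H = C_t$, so the only work is to check that the three $t$-dependent expressions in the corollary are exactly what that lemma produces for a cycle. Since $v(C_t) = e(C_t) = t$, substituting $v(H) = e(H) = t$ into the upper bound $c^2(\log n)^{-2e(H)/(e(H)-2)} n^{-(v(H)-3)/(e(H)-2)}$ gives $c^2(\log n)^{-2t/(t-2)} n^{-(t-3)/(t-2)}$, and substituting into the guaranteed leftover count $c/(n^{v(H)-3} p^{e(H)-1})$ gives $c/(n^{t-3} p^{t-1})$. Both match the corresponding quantities in the statement, and the conclusion $\delta(G) \ge (1-\eps)np$ is identical. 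One should also note that $C_t$ contains a cycle (trivially), so the hypothesis of the lemma is met, and that the constant $\eps$ plays the same role in both statements.

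The single piece of genuine content is the value of the lower endpoint $n^{-1/m_2(C_t)}$, for which I would verify that $m_2(C_t) = (t-1)/(t-2)$. By definition $m_2(C_t) = \max_{H' \subseteq C_t,\, v(H') \ge 3} (e(H')-1)/(v(H')-2)$. Taking $H' = C_t$ itself yields the value $(t-1)/(t-2)$. Any other candidate $H' \subsetneq C_t$ is a proper subgraph of a single cycle, hence acyclic, i.e.\ a disjoint union of paths; so if $H'$ has $v$ vertices, $e$ edges and $f \ge 1$ components then $e = v - f$ and therefore $(e-1)/(v-2) = (v-f-1)/(v-2) \le (v-2)/(v-2) = 1$. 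Since $(t-1)/(t-2) > 1$, the maximum is attained by the full cycle and equals $(t-1)/(t-2)$, so $n^{-1/m_2(C_t)} = n^{-(t-2)/(t-1)}$, completing the identification of the range of $p$. As every expression now coincides with one produced by Lemma~\ref{lemma:deleting}, the corollary is immediate; I expect no real obstacle beyond this routine computation of $m_2(C_t)$.
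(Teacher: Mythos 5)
Your proposal is correct and is exactly the route the paper takes: the paper states that Corollary~\ref{cor:cycle} ``follows directly from Lemma~\ref{lemma:deleting}'' and gives no further argument, and your substitution $v(C_t)=e(C_t)=t$ together with the (correct) verification that $m_2(C_t)=(t-1)/(t-2)$ is all that is needed.
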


When $t = 4$ the upper bound on $p$ in the previous corollary is $n^{-1/2} \log^{-4} n$. On the other hand, Allen et. al \cite{allen2015local} showed that if 
 $p \gg (\log n / n)^{1/2}$ then every subgraph of $\Gnp$ with minimum degree at least $(1/2 + o(1)) np$  contains a $C_4$-packing which covers all the vertices (assuming the divisibility constraint). In the remaining regime of $p$, Theorem \ref{thm:main} gives a $C_4$-packing which covers all but at most $O(p^{-m_2(C_4)}) = O(p^{-3/2})$ vertices, whereas Corollary \ref{cor:cycle} shows that no $C_4$-packing can have a leftover smaller than $\Omega(1 / (np^3))$. In particular, this leaves a gap when 
 $ n^{-2/3} \le p \le n^{-1/2}$. For cycles of bigger length the result from \cite{allen2015local} still applies only for $p \gg (\log n / n)^{-1/2}$ while the bound on $p$ in Corollary \ref{cor:cycle} becomes smaller.

Next claim is corollary of Lemma \ref{lemma:deleting} applied to the case of complete graphs. 
\begin{corollary}[Complete graphs]
Given an integer $t \ge 3$ and a constant $\eps > 0$, there exists $c > 0$
such that if 
$$
	n^{-1/m_2(K_t)} \le p \ll 1
$$
then $\Gnp$ a.a.s. contains a spanning subgraph of minumum degree at least $(1 - \eps)np$ such that at least
$$
	\max_{\ell \in \{3, \ldots, t\}}\{ c / (n^{\ell - 3}p^{\ell (\ell - 1)/ 2 - 1})\}
$$
vertices are not contained in a copy of $K_t$.
\end{corollary}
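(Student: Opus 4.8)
The plan is to obtain the corollary by applying Lemma~\ref{lemma:deleting} to cliques of \emph{every} size up to $t$ and keeping the best resulting bound, the point being that a vertex which misses a small clique automatically misses the large one. First I would record this monotonicity: for $3 \le \ell \le t$ we have $K_\ell \subseteq K_t$, so if a vertex $v$ lies in no copy of $K_\ell$ in a graph $G$, then it lies in no copy of $K_t$ either, since any $K_t$ through $v$ would contain, on $v$ together with $\ell-1$ of its other vertices, a copy of $K_\ell$ through $v$. Hence it suffices, for each fixed $\ell$, to produce a spanning subgraph of minimum degree at least $(1-\eps)np$ in which at least $c/(n^{\ell-3}p^{\binom{\ell}{2}-1})$ vertices miss every copy of $K_\ell$. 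Since $v(K_\ell)=\ell$ and $e(K_\ell)=\binom{\ell}{2}=\ell(\ell-1)/2$, substituting $H=K_\ell$ into Lemma~\ref{lemma:deleting} yields precisely the $\ell$-th term $c/(n^{\ell-3}p^{\ell(\ell-1)/2-1})$ of the claimed maximum, so a single valid application of the lemma for a suitable $\ell$ already gives the stated leftover.

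Next I would reduce the maximum to two candidate values of $\ell$ and check the hypotheses of Lemma~\ref{lemma:deleting} for them. Writing $p=n^{-\alpha}$, the exponent of $n$ in the $\ell$-th bound is $-(\ell-3)+\alpha(\binom{\ell}{2}-1)$, whose second derivative in $\ell$ equals $\alpha>0$; being convex, it attains its maximum over $\{3,\dots,t\}$ at an endpoint, so it is enough to treat $\ell=3$ and $\ell=t$. The lower-bound hypothesis $n^{-1/m_2(K_\ell)}\le p$ is then automatic: a short computation gives $m_2(K_\ell)=(\binom{\ell}{2}-1)/(\ell-2)=(\ell+1)/2$, which is increasing in $\ell$, so $n^{-1/m_2(K_\ell)}=n^{-2/(\ell+1)}\le n^{-2/(t+1)}=n^{-1/m_2(K_t)}\le p$ for every $\ell\le t$.

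The delicate point, and the main obstacle, is the upper bound on $p$ required by Lemma~\ref{lemma:deleting}, which is stringent for small cliques (for $K_3$ it reads roughly $p\lesssim(\log n)^{-6}$, with no saving power of $n$). For the endpoint $\ell=t$ there is no difficulty: the $\ell=3$ and $\ell=t$ bounds coincide at $p_\ast=n^{-(t-3)/(\binom{t}{2}-3)}$, and exactly on the range $p\le p_\ast$ where $\ell=t$ wins one has, since $(t-3)/(\binom{t}{2}-3)>(t-3)/(\binom{t}{2}-2)$, that $p\le p_\ast$ forces $p$ comfortably below the $K_t$ upper bound $c^2(\log n)^{-2\binom{t}{2}/(\binom{t}{2}-2)}n^{-(t-3)/(\binom{t}{2}-2)}$ for all large $n$, so the lemma applies. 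The awkward regime is $\ell=3$ with $p$ large: there $c/p^2$ is the maximiser while the lemma only licenses $p\lesssim(\log n)^{-6}$. I would handle this by noting that throughout the excess window $(\log n)^{-6}\ll p\ll1$ the maximising quantity $c/p^2$ is itself only polylogarithmic in $n$, so the statement carries genuinely polynomial content precisely in the range where $p$ is a small enough power of $n$ for the maximising clique's hypothesis to be met; reconciling the corollary's blanket ``$p\ll1$'' with these per-clique upper bounds is thus the only place where real care is needed, and the clean statement holds once $p$ is capped so that the winning $K_\ell$ lies in the admissible range of Lemma~\ref{lemma:deleting}.
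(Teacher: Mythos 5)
Your overall strategy --- apply Lemma~\ref{lemma:deleting} separately to each $K_\ell$, use the fact that a vertex lying in no copy of $K_\ell$ lies in no copy of $K_t$, and reduce the maximum to the endpoints $\ell=3$ and $\ell=t$ by convexity of the exponent --- is in substance the same as the paper's argument (the paper packages the endpoint comparison as an induction on $t$, showing that for $p \ge n^{-1/m_2(K_{t+1})}$ the $\ell=t$ term is dominated by $1/p^2$ and otherwise applying Lemma~\ref{lemma:deleting} to $K_t$ directly). Your verification of the lower-bound hypothesis via $m_2(K_\ell)=(\ell+1)/2$ and of the $\ell=t$ endpoint via the comparison of $(t-3)/(\tbinom{t}{2}-3)$ with $(t-3)/(\tbinom{t}{2}-2)$ matches the paper's computation that $\frac{e(K_t)-2}{v(K_t)-3} > m_2(K_{t+1})$.

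However, there is a genuine gap at the $\ell=3$ endpoint, which you have correctly located but not closed. When $p$ exceeds $p_\ast = n^{-(t-3)/(\binom{t}{2}-3)}$ the maximiser is $c/p^2$, while Lemma~\ref{lemma:deleting} applied to $K_3$ only covers $p \le c^2(\log n)^{-6}$. For $p$ between $c^2(\log n)^{-6}$ and $o(1)$ you must still exhibit a spanning subgraph of minimum degree $(1-\eps)np$ containing at least $c/p^2 = \omega(1)$ vertices in no triangle; observing that $c/p^2$ is ``only polylogarithmic'' in that window does not produce such a subgraph, and ``capping $p$'' proves a strictly weaker statement than the corollary, whose hypothesis is the full range $n^{-1/m_2(K_t)} \le p \ll 1$. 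The paper closes exactly this window by importing an external result (Proposition~4.6 of Balogh, Lee, and Samotij \cite{balogh2012corradi}), which for all $p \ge Cn^{-1/2}$ supplies a spanning subgraph of minimum degree $(1-\eps)np$ with $\Omega(p^{-2})$ vertices contained in no triangle, and hence in no $K_t$; Lemma~\ref{lemma:deleting} with $H=K_3$ is used only in the narrow window $n^{-1/2} \le p \le Cn^{-1/2}$. Without this input, or an equivalent direct construction for dense $p$, your argument does not establish the corollary as stated.
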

\begin{proof}
Let us for simplicity denote $m(t) := \max_{\ell \in \{3, \ldots, t\}}\{ 1 / (n^{\ell - 3}p^{\ell (\ell - 1)/ 2 - 1})\}$.
We prove the claim by induction on $t$. For the base of induction, i.e. $t = 3$, the claim follows directly from the result of Balogh et. al. \cite{balogh2012corradi} (c.f. Proposition 4.6)  in the case when $p \ge C n^{-1/m_2(K_3)}$, for some large enough constant $C$, and by Lemma \ref{lemma:deleting} when $n^{-m_2(K_3)} \le p \le  C n^{-1/m_2(K_3)}$. Let us now assume that the corollary holds for all $t' < t$ and $t\ge 4$.
Note that
$$
\frac{1}{n^{t - 3}p^{t (t - 1)/ 2 - 1}} \le \frac{1}{p^2}
$$
when $p^{t(t-1)/2 - 3} n^{t- 3} \ge 1$. However, this conditions is true when $p \ge n^{-1/m_2(K_{t+1})}$ since 
\begin{align*}
\frac{t (t -1) /2  -3 }{t -3 } = \frac{t^2 - t - 6}{2(t- 3)} = 
\frac{(t +2  )(t -3)}{2(t -3)} = \frac{(t+2) (t -1)}{2(t-1)} = \frac{(t+1)t - 2}{2(t-1)}  = m_2(K_{t+1}).
\end{align*}
This implies $m(t-1) = m(t)$, when $p \ge n^{-1/m_2(K_{t+1})}$. Therefore, by induction hypothesis there exist a constant $c > 0$ such that $\Gnp$ a.a.s  contains a spanning subgraph with minimum degree at least $(1-\eps) np$ such that at least $c \cdot  m(t)$ vertices are not contained in a copy of $K_{t-1}$ and, consequently, in a copy of  $K_t$.
On other hand, if $n^{-1/m_2(K_t)} \le p \le n^{-1/m_2(K_{t+1})}$ then we can use Lemma \ref{lemma:deleting} applied to $H = K_t$ to obtain the result, since 
$
\frac{e(K_t) -2}{v(K_t)-3} > m_2(K_{t+1}).
$
\end{proof}

Recall that the bound of Balogh et al. \cite{balogh2012corradi} implies that for any $\eps >0$ and $t \ge 4$ there exists a constant $C$ such that if  $p \ge C n^{-m_2(K_t)}$ then $\Gnp$ a.a.s contains a spanning subgraph with minimum degree $(1 - \eps) np$ such that $\Omega(p^{-2})$ vertices do not belong to a copy of $K_t$. The corollary above guarantees a larger set of `isolated' vertices in the certain range of $p$. For example, in the case of $K_4$ and $p$ in the interval  $n^{-m_2(K_4)} \le p \ll n^{-1/3}$ we obtain a spanning subgraph which contains $c/(np^5) \gg 1/p^2$ vertices that are not contained in a copy of $K_4$. On the other hand, Theorem \ref{thm:main} gives the existence of an $H$-packing which covers all but at most $O(p^{-m_2(K_4)}) = O(p^{-5/2})$ vertices. A simple calculations shows that this leaves the gap in such range of $p$. In the case of larger complete graphs the situation becomes even less clear.

\section{Concluding remarks} \label{sec:remarks}

Using a simple bootstrapping approach, we showed that if $p \ge Cn^{-1/m_2(H)}$ then $\Gnp$ a.a.s has the property that every spanning subgraph with the minimum degree at least $(1 - 1/\crchi + \gamma)np$ contains an $H$-packing which covers all but at most $O(p^{-m_2(H)})$ vertices. As observed in \cite{balogh2012corradi} this is the best one can hope for in the case where $H = K_3$, since $\Gnp$ contains a spanning subgraph with minimum degree $(1 - o(1))np$ and a set of $\Omega(p^{-m_2(K_3)})$ vertices which do not belong to a copy of $K_3$. 
% In fact, they show that for any $\eps > 0$ if $p \gg n^{-1/2 + o(1)}$ then a.a.s every spanning subgraph $G \subseteq \Gnp$ such that
% \begin{itemize}
% 	\item the minimum degree of $G$ is at least $(2/3 + \eps) np$, and
% 	\item each pair of vertices have at least $\eps np^2$ common neighbors
% \end{itemize}
% contains the \emph{square} of a Hamilton cycle, i.e. a graph obtained from a cycle on $n$ vertices by adding additional edge between any two vertices of distance 2. Such a graph clearly contains a $K_3$-factor if $3 \mid n$. 
This leads to the following question. 
\begin{question}
Let $t \ge 3$ be an integer, and suppose $p \gg n^{-1/m_2(K_t)}$ and $t \mid n$. Is it true that $\Gamma \sim \Gnp$ a.a.s has the property that every spanning subgraph $G \subseteq \Gamma$ such that
\begin{itemize}
	\item the minimum degree of $G$ is at least $(1 - 1/t + o(1))np$, and
	\item every vertex is contained in $\eps n^{t - 1} p^{\binom{t}{2}}$ copies of $K_t$
\end{itemize}
contains a $K_t$-factor?
\end{question} 

Our result from Corollary \ref{cor:cycle} shows that for the case of $C_4$
some leftover is unavoidable when $n^{-1/2 - o(1)}$, while 
 the result of Allen et al. shows that the minimum degree is indeed sufficient for the existence of a $C_4$-factor when $p \gg n^{-1/2}$. However, for $t \ge 5$ their result has the same lower bound on $p$, while Corollary \ref{cor:cycle} shows that the leftover is unavoidable only when $p \le n^{-(t-3)/(t-2) - o(1)}$. 
 It is therefore tempting to conjecture that already $p \ge n^{-(t-3)/(t-2)}$ is enough for the existence of a $C_5$-factor. The proof in \cite{allen2015local} relies on a general blow-up lemma \cite{allen2014blow} in which the bound on $p$ heavily depends on the maximum degree of the graph we wish to embed. Proving a better bound on $p$ for such a lemma seems difficult, however, it is plausible that a version tailored for packings of small  cycles might be easier to obtain. Having said this we ask the following question.
 \begin{question}
Let $t \ge 5$ be an integer, and suppose $p \gg n^{-(t-3)/(t-2)}$ and $t \mid n$. Is it true that $\Gamma \sim \Gnp$ a.a.s has the property that every spanning subgraph $G \subseteq \Gamma$, such that
 the minimum degree of $G$ is at least $(1 - 1/\chi(C_t) + o(1))np$,
contains a $C_t$-factor?
\end{question}

\bibliographystyle{abbrv}
\bibliography{refs}

\begin{thebibliography}{10}

\bibitem{allen2015local}
P.~Allen, J.~B{\"o}ttcher, J.~Ehrenm{\"u}ller, and A.~Taraz.
\newblock Local resilience of spanning subgraphs in sparse random graphs.
\newblock {\em Electronic Notes in Discrete Mathematics}, 49:513--521, 2015.

\bibitem{allen2014blow}
P.~Allen, J.~B{\"o}ttcher, H.~H{\`a}n, Y.~Kohayakawa, and Y.~Person.
\newblock Blow-up lemmas for sparse graphs.
\newblock 2016.
\newblock In preparation.

\bibitem{alon1992almosth}
N.~Alon and R.~Yuster.
\newblock Almost {H}-factors in dense graphs.
\newblock {\em Graphs and Combinatorics}, 8(2):95--102, 1992.

\bibitem{alon1996h}
N.~Alon and R.~Yuster.
\newblock H-factors in dense graphs.
\newblock {\em Journal of Combinatorial Theory, Series B}, 66(2):269--282,
  1996.

\bibitem{balogh2012corradi}
J.~Balogh, C.~Lee, and W.~Samotij.
\newblock Corr{\'a}di and {H}ajnal's theorem for sparse random graphs.
\newblock {\em Combinatorics, Probability and Computing}, 21(1-2):23--55, 2012.

\bibitem{balogh2015independent}
J.~Balogh, R.~Morris, and W.~Samotij.
\newblock Independent sets in hypergraphs.
\newblock {\em Journal of the American Mathematical Society}, 28(3):669--709,
  2015.

\bibitem{conlon2014klr}
D.~Conlon, W.~Gowers, W.~Samotij, and M.~Schacht.
\newblock On the {K{\L}R} conjecture in random graphs.
\newblock {\em Israel Journal of Mathematics}, 203(1):535--580, 2014.

\bibitem{hajnal}
K.~Corr\'adi and A.~Hajnal.
\newblock On the maximal number of independent circuits in a graph.
\newblock {\em Acta Math. Acad. Sci. Hungar.}, 14:423--439, 1963.

\bibitem{erdos60evo}
P.~Erd{\H{o}}s and A.~R{\'e}nyi.
\newblock On the evolution of random graphs.
\newblock {\em Magyar Tud. Akad. Mat. Kutat\'o Int. K\"ozl.}, 5:17--61, 1960.

\bibitem{hajnal70sz}
A.~Hajnal and E.~Szemer{\'e}di.
\newblock Proof of a conjecture of {P}. {E}rd{\H o}s.
\newblock In {\em Combinatorial theory and its applications, {II} ({P}roc.
  {C}olloq., {B}alatonf\"ured, 1969)}, pages 601--623. North-Holland,
  Amsterdam, 1970.

\bibitem{huang2012bandwidth}
H.~Huang, C.~Lee, and B.~Sudakov.
\newblock Bandwidth theorem for random graphs.
\newblock {\em Journal of Combinatorial Theory, Series B}, 102(1):14--37, 2012.

\bibitem{johansson2008factors}
A.~Johansson, J.~Kahn, and V.~Vu.
\newblock Factors in random graphs.
\newblock {\em Random Structures \& Algorithms}, 33(1):1--28, 2008.

\bibitem{kimvu00}
J.~H. Kim and V.~H. Vu.
\newblock Concentration of multivariate polynomials and its applications.
\newblock {\em Combinatorica}, 20(3):417--434, 2000.

\bibitem{kohayakawa1997szemeredi}
Y.~Kohayakawa.
\newblock Szemer{\'e}di's regularity lemma for sparse graphs.
\newblock In {\em Foundations of computational mathematics}, pages 216--230.
  Springer, 1997.

\bibitem{kohayakawa1997onk}
Y.~Kohayakawa, T.~{\L}uczak, and V.~R\"odl.
\newblock On {$K_4$}-free subgraphs of random graphs.
\newblock {\em Combinatorica}, 17(2):173--213, 1997.

\bibitem{komlos2000tiling}
J.~Koml{\'o}s.
\newblock Tiling {T}ur{\'a}n theorems.
\newblock {\em Combinatorica}, 20(2):203--218, 2000.

\bibitem{komlos2001proof}
J.~Koml{\'o}s, G.~S{\'a}rk{\"o}zy, and E.~Szemer{\'e}di.
\newblock Proof of the {A}lon--{Y}uster conjecture.
\newblock {\em Discrete Mathematics}, 235(1):255--269, 2001.

\bibitem{kuhnosthus}
D.~K{\"u}hn and D.~Osthus.
\newblock Embedding large subgraphs into dense graphs.
\newblock In {\em Surveys in combinatorics 2009}, volume 365 of {\em London
  Math. Soc. Lecture Note Ser.}, pages 137--167. Cambridge Univ. Press,
  Cambridge, 2009.

\bibitem{kuhn2009minimum}
D.~K{\"u}hn and D.~Osthus.
\newblock The minimum degree threshold for perfect graph packings.
\newblock {\em Combinatorica}, 29(1):65--107, 2009.

\bibitem{saxton2015hypergraph}
D.~Saxton and A.~Thomason.
\newblock Hypergraph containers.
\newblock {\em Inventiones mathematicae}, 201(3):925--992, 2015.

\bibitem{zhao03koml}
A.~Shokoufandeh and Y.~Zhao.
\newblock Proof of a tiling conjecture of {K}oml\'os.
\newblock {\em Random Structures Algorithms}, 23(2):180--205, 2003.

\end{thebibliography}

\end{document}